\documentclass[11pt]{amsart}
\usepackage{etex} %% This is for an extension and allows us to use many packages (if you know what I mean).

%% Page Dimensions %%
%\usepackage[a4paper, margin=1in]{geometry} %% A4 setup
%\usepackage[letterpaper, margin=1in]{geometry} %% Letter setup

%%%%%%%%%%%%%%%%%%%%%%%%%%%%%%%%%%%
                                             %% Copy Start %%
%%%%%%%%%%%%%%%%%%%%%%%%%%%%%%%%%%%

%% Packages %%
\usepackage{amsmath}
\usepackage{amssymb}
\usepackage{amsthm}
\usepackage{amsfonts}
\usepackage{booktabs}
\usepackage[usenames,dvipsnames,svgnames]{xcolor}
\usepackage{epsfig}
\usepackage{fancyvrb} % For verbatim environments for computer code
\usepackage{hyperref}
\usepackage{mathrsfs}
\usepackage{mathtools}
\usepackage{pictexwd, dcpic}
\usepackage{stmaryrd}
\usepackage{enumitem} % can 'pause' lists.
%\usepackage{mathabx}

%% Tikz Shamiz %%
\usepackage{tikz}
\usepackage{tikz-cd}
\usepackage{caption}
\usetikzlibrary{decorations.markings}

%% Blackboard Bold %%
  %Overwritten inbuilt symbol%

\newcommand{\RR}{\mathbb{R}}
 %Overwritten inbuilt symbol%

\newcommand{\ZZ}{\mathbb{Z}}

%% Mathcal %%

\newcommand{\vB}{\mathcal{B}}
\newcommand{\vC}{\mathcal{C}}

\newcommand{\vF}{\mathcal{F}}

\newcommand{\vI}{\mathcal{I}}
\newcommand{\vJ}{\mathcal{J}}

\newcommand{\vL}{\mathcal{L}}

\newcommand{\vS}{\mathcal{S}}
\newcommand{\vT}{\mathcal{T}}

%% Matrix Groups $$

%%Alan's Thesis%%

%% Operators and Functions %%

\newcommand{\Aut}{\operatorname{Aut}}

 %Overwritten inbuilt symbol%

\newcommand{\Map}{\operatorname{Map}}
\newcommand{\PMap}{\operatorname{PMap}}

 %Overwritten inbuilt symbol%

%% Categories %%

%% Quick Symbols %%

\newcommand{\sm}{\setminus}
\newcommand{\ol}[1]{\overline{#1}}

 %Needs mathabx package%

%\renewcommand{\l}{\ell} %Overwritten inbuilt symbol%

%% Other/Special Macros %%

\newcommand{\eand}{\quad \text{ and } \quad}

%% Colours %%
\definecolor{lightgrey}{gray}{.85}

%% Thoerem Shizzle %%
\theoremstyle{definition}

\theoremstyle{plain}
\newtheorem{thm}{Theorem}[section]
\newtheorem{main}{Theorem}
\newtheorem{lem}[thm]{Lemma}

\newtheorem{prop}[thm]{Proposition}
\newtheorem{fact}[thm]{Fact}

\newtheorem{conj}[thm]{Conjecture}

\theoremstyle{definition}

%% Formatting Preferences%

%\renewcommand{\emph}{\bf}

%%%%%%%%%%%%%%%%%%%%%%%%%%%%%%%%%%%
                                             %% Copy End %%
%%%%%%%%%%%%%%%%%%%%%%%%%%%%%%%%%%%

%\date{Date}

\newcommand{\MCT}{\Map^\pm(\vT)}

\begin{document}
\title[The mapping class group of the Cantor tree]{The mapping class group of the Cantor tree has only geometric normal subgroups}
\author{Alan McLeay}
\address{University of Luxembourg}
\email{mcleay.math@gmail.com}
\maketitle
%\newpage
%\tableofcontents
%\newpage

%%%%%%%%%%%%%%%%%%%%%%%%%%%%%%%%%%%
\begin{abstract}
A normal subgroup of the (extended) mapping class group of a surface is said to be geometric if its automorphism group is the mapping class group.  We prove that in the case of the Cantor tree surface, every normal subgroup is geometric.  We note that there is no non-trivial finite-type mapping class group for which this statement is true.  We study a generalisation of the curve graph, proving that its automorphism group is again the mapping class group.  This strategy is adapted from that of Brendle-Margalit and the author for certain normal subgroups in the finite-type setting.
\end{abstract}

%%%%%%%%%%%%%%%%%%%%%%%%%%%%%%%%%%%

\section{Introduction}\label{sec_intro}
Let $\vT$ be the \emph{Cantor tree surface}, that is, the infinite-type surface obtained by taking the boundary of a neighbourhood of an infinite trivalent tree embedded in $\RR^3$, see Figure \ref{inseparable}.  Equivalently, $\vT$ is homeomorphic to removing a Cantor set from a sphere.  Let $\Map^\pm(\Sigma)$ be the \emph{(extended) mapping class group} of a surface $\Sigma$, that is, the group of isotopy classes of homeomorphisms of $\Sigma$.

\begin{main}\label{thm_normal}
If $N$ is a normal subgroup of $\MCT$ then the natural homomorphism
\[
\MCT \to \Aut N
\]
is an isomorphism.
\end{main}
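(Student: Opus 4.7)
The plan is to follow the Ivanov-style strategy as refined by Brendle--Margalit: build a combinatorial model whose automorphism group is $\MCT$, then show both $\MCT$ (by conjugation) and $\Aut N$ act on it, with the $\Aut N$-action extending the natural conjugation action. The promised generalisation of the curve graph $\vG$, with $\Aut \vG \cong \MCT$, will play the role of this combinatorial model.

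First I would dispatch \emph{injectivity} of $\MCT \to \Aut N$. Given $f \in \MCT$ that centralises $N$ and any curve $c$ with $f(c) \neq c$, I would take some nontrivial $n_0 \in N$ and, using normality, conjugate and commute to produce $n \in N$ supported in a small neighbourhood of $c$; then $fnf^{-1}$ is supported near $f(c) \neq c$, so $fnf^{-1} \neq n$. Hence $f$ fixes every isotopy class of simple closed curves, and since $\MCT$ acts faithfully on curves one concludes $f = \id$. This step requires verifying that every nontrivial normal subgroup of $\MCT$ contains elements supported in arbitrarily small subsurfaces, which should follow from fragmentation and commutator tricks in $\MCT$ together with the self-similar structure of $\vT$.

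For \emph{surjectivity}, given $\phi \in \Aut N$, I would recover an induced automorphism $\phi_\ast \in \Aut \vG$ by assigning to each vertex (curve) $c$ of $\vG$ a subgroup $N_c \leq N$ that is group-theoretically characterised inside $N$ --- a natural candidate is $N$ intersected with the $\MCT$-stabiliser of $c$, or the subgroup generated by elements of $N$ supported in a single component of $\vT \sm c$. One then checks (i) $\phi(N_c) = N_{c'}$ for a unique curve $c'$, defining $\phi_\ast : c \mapsto c'$; (ii) $\phi_\ast$ preserves the edge relation of $\vG$, via commutation relations among the $N_c$'s that reflect disjointness of curves; and (iii) the resulting $g_\phi \in \Aut \vG \cong \MCT$ implements $\phi$ on $N$, by checking agreement on a distinguished generating family of $N$.

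The main obstacle is the recognition step: extracting a canonical curve from the group-theoretic data of a completely arbitrary nontrivial normal subgroup $N$. In the finite-type Brendle--Margalit framework this leans on Dehn twists and the Nielsen--Thurston classification, neither of which is directly at hand here; instead one must exploit the self-similarity of $\vT$ and the abundance of infinite-order elements in $\MCT$ to produce and group-theoretically characterise the $N_c$'s, and to ensure that every nontrivial $N$ is rich enough for the scheme to go through.
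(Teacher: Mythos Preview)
Your high-level plan is exactly the paper's: build a graph $\vG$ with $\Aut\vG\cong\MCT$, factor the conjugation map through an injection $\Aut N\hookrightarrow\Aut\vG$, and conclude. Where you diverge from the paper, and where the real gap lies, is in the recognition step and in the choice of $\vG$.

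You propose to attach to each \emph{curve} $c$ a subgroup $N_c\le N$ and to recover $c$ from $N_c$. For an arbitrary normal $N$ this need not work: there is no reason $N$ contains any element supported in an annular (or even a four-holed-sphere) neighbourhood of a given curve, so the family $\{N_c\}$ may carry no information. The paper sidesteps this by letting the model depend on $N$: the graph is $\vS_n$, whose vertices are $n$-holed spheres, and the integer $n$ is dictated by $N$ itself. The algebraic characterisation of a vertex is not a subgroup but a \emph{pair} of elements. First one isolates finitely supported elements purely algebraically: an element of $N$ has countable conjugacy class in $N$ only if its square has compact support (an adaptation of a Bavard--Dowdall--Rafi argument). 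Within this set one calls $(a,b)$ an \emph{overlapping pair} if no powers of $a,b$ commute, and a \emph{vertex pair} if the set $COP_N(a,b)$ of overlapping pairs commuting with both $a$ and $b$ is maximal. One then shows that the complement of the support of $COP_N(a,b)$ is a single $n$-holed sphere $V_{(a,b)}$, that $n$ is the same for every vertex pair, and that $V_{(a,b)}$ and $V_{(c,d)}$ are disjoint iff $(c,d)\in COP_N(a,b)$. This is the missing idea: maximality among overlapping pairs forces the ``support'' to be a minimal sphere, and the Nielsen--Thurston input is available because one has already reduced to compactly supported elements.

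Your injectivity sketch also relies on producing elements of $N$ supported in arbitrarily small subsurfaces, which is essentially the same unresolved point. In the paper, injectivity falls out of the same machinery: if $f$ centralises $N$ then $f$ fixes every $V_{(a,b)}$, hence (by transitivity of $\MCT$ on $n$-holed spheres) every $n$-holed sphere, hence every curve, hence $f=\id$.
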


For finite-type surfaces with empty boundary, this statement is either trivial or false \cite{CMM}.  That is, the statement only holds for spheres with up to three punctures.  If we consider finite-type surfaces with boundary, then any Dehn twist about a curve isotopic to the boundary generates a normal subgroup isomorphic to $\ZZ$, and so the statement fails in these cases also.  We may interpret this result as saying that the \emph{extreme homogeneity} of $\vT$ is mirrored by the subgroup structure of its mapping class group.  As an example of this homogeneity, $\vT$ has no subsurfaces of the following type.

\subsection*{Nondisplaceable subsurfaces}  Mann-Rafi define a finite-type subsurface $S \subset \Sigma$ to be a \emph{nondisplaceable} if $f(S) \cap S \neq \emptyset$ for all $f \in \Map^{\pm}(\Sigma)$ \cite{MR}.

This then suggests the following conjecture.

\begin{conj}
The natural map $\Map^{\pm}(\Sigma) \to \Aut N$ is an isomorphism for all $N$ if and only if $\Sigma$ contains no nondisplaceable subsurfaces.
\end{conj}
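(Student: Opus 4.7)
The conjecture is a biconditional and naturally splits into two implications of rather different difficulty. The forward direction, ``no nondisplaceable subsurface implies every normal subgroup is geometric'', is the hard one: it generalizes Theorem~\ref{thm_normal} to a wide family of infinite-type surfaces. The reverse direction, in contrapositive form, asks for one non-geometric normal subgroup whenever a nondisplaceable subsurface is present; this is lower-hanging fruit because only a single witness is required.

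For the easier direction I would exploit the rigidity forced by nondisplaceability. If $S \subset \Sigma$ is a nondisplaceable finite-type subsurface, every $f \in \Map^\pm(\Sigma)$ must send $S$ to a subsurface intersecting $S$, which obstructs the displacement-style simplifications of normal closures available in the Cantor tree setting. I would try two constructions in turn: first, take the normal closure of a Dehn twist about a curve near $\partial S$ and look for a surviving extra outer automorphism---this is the mechanism behind the finite-type boundary examples noted in the introduction; failing that, import the Chen--Margalit--McLeay construction of a non-geometric normal subgroup from $\Map(S)$ along the inclusion $S \hookrightarrow \Sigma$. The Mann--Rafi classification of nondisplaceable subsurfaces in terms of distinguished end structures should reduce this to finitely many representative cases.

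For the harder direction I would extend the strategy promised in the abstract. Step one is to define a generalized configuration graph $\vG(\Sigma)$---with vertices given by combinatorial types of multicurves or subsurfaces in $\Sigma$---and prove the rigidity statement $\Aut \vG(\Sigma) \cong \Map^\pm(\Sigma)$. The absence of nondisplaceable subsurfaces is precisely what powers the key transitivity input, since any two configurations of the same topological type can then be interchanged by a mapping class. Step two is to show that any nontrivial normal subgroup $N$ contains enough canonical elements (partial pseudo-Anosovs, or products of Dehn twists about multicurves of a fixed topological type) so that the conjugation action of $\Aut N$ on these elements induces a well-defined automorphism of $\vG(\Sigma)$. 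This yields a factoring $\Aut N \to \Aut \vG(\Sigma) \cong \Map^\pm(\Sigma)$ which one verifies to invert the natural map.

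The main obstacle is selecting the right graph $\vG(\Sigma)$ and proving its rigidity uniformly. The Cantor tree surface is homogeneous in ways that a generic surface with no nondisplaceable subsurface is not, so several steps in the proof of Theorem~\ref{thm_normal} will likely fail to transfer verbatim; each new end-space type may demand tailored combinatorial input to pin down $\Aut \vG(\Sigma)$. I would first attempt the Loch Ness monster as a test case, since its end space is the next simplest after $\vT$ and its mapping class group is comparatively well understood, and only afterwards try to formulate a framework handling all surfaces the conjecture covers.
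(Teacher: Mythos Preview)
The statement you are addressing is explicitly presented in the paper as a \emph{conjecture}; it is offered as motivation immediately after Theorem~\ref{thm_normal}, and the paper makes no attempt to prove either direction in general. There is therefore no proof in the paper against which your proposal can be compared.

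What you have written is a research outline rather than a proof, and you appear to be aware of this (``I would try'', ``The main obstacle is''). As a strategy it is reasonable and consonant with the paper's philosophy: the hard direction would indeed require a rigidity theorem $\Aut\vG(\Sigma)\cong\Map^\pm(\Sigma)$ for a suitable configuration graph together with an algebraic characterization of vertex-type elements in an arbitrary normal subgroup, generalizing Sections~\ref{section_graph}--\ref{section_automorphisms}; and your identification of transitivity as the point where the no-nondisplaceable-subsurface hypothesis enters is the right intuition. But the genuine gap is that nothing is actually established. Every step in both directions is conditional on results that do not yet exist: rigidity of $\vG(\Sigma)$ for general $\Sigma$, the existence of enough canonical elements in arbitrary $N$ for such $\Sigma$, and, for the reverse direction, the construction of a non-geometric normal subgroup from a nondisplaceable subsurface (your boundary-twist heuristic does not transfer directly, since the surfaces in question need not have boundary, and ``importing'' a finite-type non-geometric subgroup along $S\hookrightarrow\Sigma$ does not obviously yield a \emph{normal} subgroup of $\Map^\pm(\Sigma)$). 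This is an open problem; your outline is a plan of attack, not a proof to be assessed.
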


The surfaces included in this conjecture are infinitely many.  Along with the Cantor tree surface, they contain the well known (and therefore named) Blooming Cantor tree, Loch Ness Monster, Ladder, and Flute surfaces.

The definition of \emph{nondisplaceable subsurfaces} is key in the work of Mann-Rafi, which determines that members of a sizable subclass of \emph{big} mapping class groups have \emph{coarsely bounded} generating sets.  In particular, the existence of a nondisplaceable subsurface implies that the mapping class group itself is not coarsely bounded \cite[Theorem 1.9]{MR}.

\subsection*{Transitivity}
Recall that the curve graph $\vC(\Sigma)$ is the graph whose vertices are isotopy classes of essential simple closed curves; two vertices span an edge if their isotopy classes of curves contain disjoint representatives.  Another example of the homogeneity of $\vT$ is the fact that $\Map^\pm(\vT)$ acts transitively on $\vC(\vT)$.  While not unique to $\vT$, there are only a total of six surfaces where this occurs; the four or five punctured sphere, the zero or once punctured torus, or the zero or once punctured Cantor tree.  In the five cases other than $\vT$ however, the surface contains a nondisplaceable subsurface.

The proof of Theorem \ref{thm_normal} relies on first generalising $\vC(\vT)$ to another graph on which $\MCT$ also acts transitively.

\subsection*{The graph of $n$-holed spheres}
We define the \emph{graph of $n$-holed spheres} $\vS_n(\vT) = \vS_n$ to have vertices corresponding to homotopy classes of compact connected subsurfaces in $\vT$ that are homeomorphic to a sphere with exactly $n \ge 2$ boundary components.  Two vertices span an edge if they correspond to disjoint spheres in $\vT$, see Figure \ref{inseparable}.
\begin{figure}[t]
\centering
\includegraphics[scale=0.8]{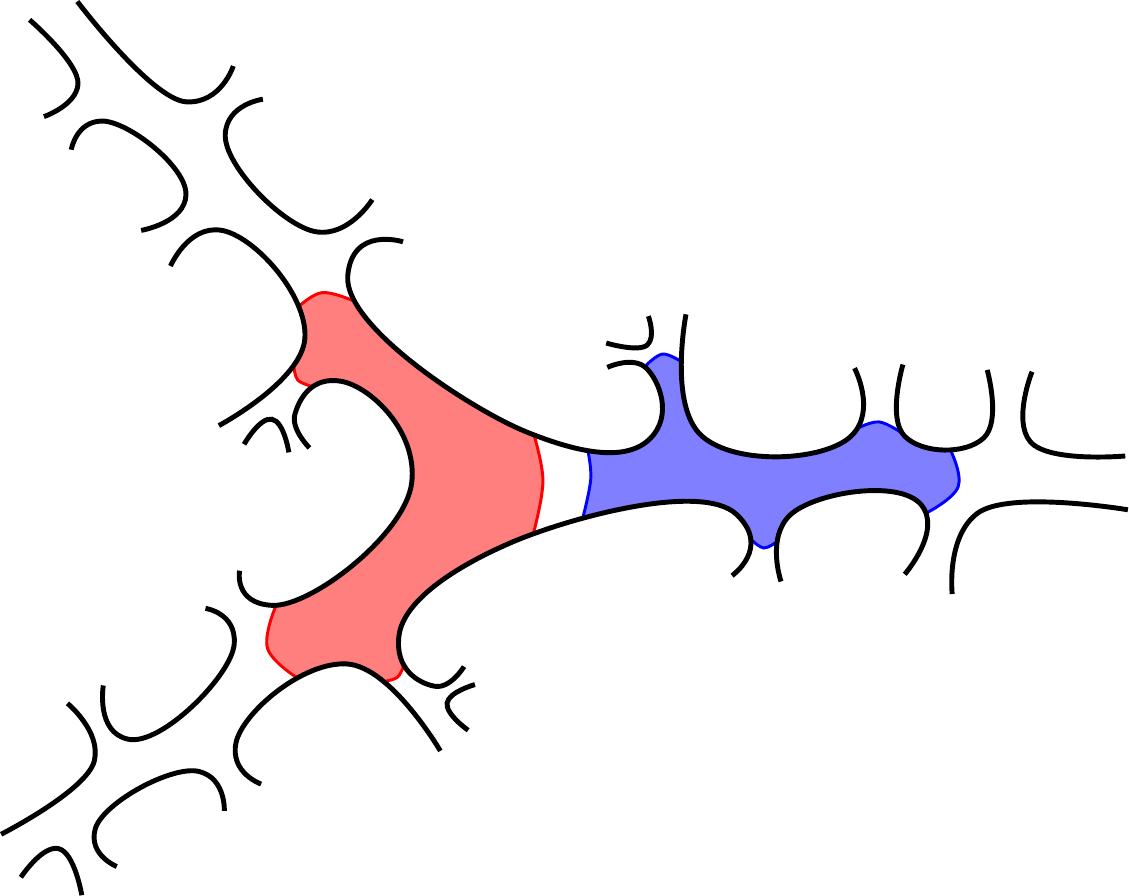}
\caption{Two disjoint $5$-holed spheres in the Cantor tree $\vT$.  The spheres correspond to adjacent vertices in $\vS_n$.  In particular, they satisfy the definition of inseparable spheres used in Lemma \ref{lem_inseparable}.}
\label{inseparable}
\end{figure}

Note that the graph $\vS_2$ is precisely the curve graph $\vC(\vT)$.  The connection between $\vC(\vT)$ and $\vS_n$ is used to prove the following:

\begin{main}\label{thm_graph}
The natural homomorphism
\[
\eta_n : \MCT \to \Aut \vS_n
\]
is an isomorphism for any $n\ge2$.
\end{main}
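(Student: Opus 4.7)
Proof plan for Theorem \ref{thm_graph}. I would prove injectivity and surjectivity of $\eta_n$ separately.

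\textbf{Injectivity.} Suppose $f \in \ker \eta_n$. The aim is to show $f$ fixes every essential simple closed curve of $\vT$ up to isotopy, and then to invoke the faithful action of $\Map^\pm(\vT)$ on $\vC(\vT)$ (the injectivity of the analogous map $\eta_2$, presumably standard for the Cantor tree). Given a curve $c$, I would use the homogeneity of $\vT$ to construct two $n$-holed spheres $P, P'$ whose unordered sets of boundary curves intersect in exactly $\{c\}$: take $P$ with $c$ as one boundary and $n-1$ generic other boundaries on one side of $c$, and take $P'$ on the opposite side of $c$, again with $c$ as one boundary but with the other $n-1$ boundaries chosen distinct from those of $P$. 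Since $f$ fixes the isotopy classes $[P]$ and $[P']$, it preserves the sets of their boundary curves setwise, and hence preserves their intersection $\{c\}$. This gives $f(c) = c$.

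\textbf{Surjectivity.} Given $\phi \in \Aut \vS_n$, I would build $f \in \Map^\pm(\vT)$ with $\eta_n(f) = \phi$ by producing an induced automorphism $\phi_\ast$ of $\vC(\vT) = \vS_2$, then lifting via the (known) isomorphism $\Aut \vC(\vT) \cong \Map^\pm(\vT)$ for the Cantor tree. To define $\phi_\ast$, I would first establish a combinatorial characterization, internal to $\vS_n$, of the relation ``$P$ and $P'$ share a boundary curve''. This is the role of the inseparability condition referenced in Lemma \ref{lem_inseparable}: since $\vT$ has genus zero, two disjoint $n$-holed spheres share at most one boundary, and a pair that shares no boundary can be separated by some $n$-holed sphere lying in the region between them, while a pair sharing a curve $c$ cannot be so separated. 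Assuming this characterization, for any triple $P_1, P_2, P_3$ sharing a common curve $c$, I would argue that $\phi(P_1), \phi(P_2), \phi(P_3)$ also share a common curve, so that the shared boundary of $\phi(P_1)$ and $\phi(P_2)$ depends only on $c$. This defines $\phi_\ast(c)$ unambiguously; standard arguments show $\phi_\ast$ is a bijection on curves preserving disjointness.

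Having obtained $\phi_\ast \in \Aut \vC(\vT)$, the cited isomorphism produces $f \in \Map^\pm(\vT)$ inducing $\phi_\ast$. Since an $n$-holed subsurface of $\vT$ is determined up to isotopy by its boundary multicurve, and $f$ acts as $\phi_\ast$ on each boundary curve, one concludes $\eta_n(f) = \phi$, which closes the argument.

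\textbf{Main obstacle.} The core difficulty is the combinatorial characterization of ``shares a boundary'' and the well-definedness of $\phi_\ast$. It is not enough to know that some curve is shared by $\phi(P_1)$ and $\phi(P_2)$; one must verify that the same curve is witnessed by every pair in the fan of $n$-holed spheres through $c$, and that distinct curves $c \ne c'$ yield distinct $\phi_\ast(c) \ne \phi_\ast(c')$. I expect the inseparability framework of Lemma \ref{lem_inseparable} to do the work, but pinning down the bijection on curves — rather than only on unordered pairs of fans — is the technical heart of the proof.
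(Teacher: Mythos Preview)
Your approach is essentially the same as the paper's: injectivity via pairs of $n$-holed spheres sharing exactly the curve $c$ (these are precisely the inseparable pairs), and surjectivity by pushing $\phi$ down to an automorphism $\phi_\ast$ of $\vC(\vT)$ via Lemma~\ref{lem_inseparable} and then invoking $\Aut\vC(\vT)\cong\MCT$. The paper likewise leaves the well-definedness of $\phi_\ast$ as an exercise; the tool that dissolves your stated obstacle is the combinatorial characterisation of \emph{sides} given just before Lemma~\ref{lem_inseparable}: since ``$S$ and $L$ lie in the same component of $\vT\setminus R$'' is detected inside $\vS_n$, and an inseparable partner on a fixed side of $R$ shares a uniquely determined boundary curve of $R$, any two inseparable pairs through $c$ are sent by $\phi$ to inseparable pairs on the same side of a common sphere, hence sharing the same curve.
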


\subsection{Studied examples}
Before giving some history of these type of results, we give a short list of normal subgroups that have been studied in the literature.  Some definitions are specific to $\MCT$, while others apply to all big mapping class groups.

\subsection*{The (pure) mapping class group}
The group $\Map(\Sigma)$ is the subgroup of $\Map^\pm(\Sigma)$ consisting of only orientation-preserving homeomorphisms.  The group $\PMap(\Sigma)$ is the subgroup of $\Map(\Sigma)$ consisting of the homeomorphisms that fix every end of $\Sigma$.  We note that the automorphism groups of both $\Map(\Sigma)$ and $\PMap(\Sigma)$ have already been proved to be $\Map^\pm(\Sigma)$ in greater generality \cite{PV18}, \cite{HMV19}, \cite{BDR18}.

%\subsection*{A Teichm\"uller modular group}
%Unlike for finite-type surfaces, for a given hyperbolic structure $X$ on $\vT$ we cannot equate $\Map(\vT)$ with the \emph{Teichm\"uller modular group} $\Mod(X) < \Map(\vT)$, the subgroup of quasi-conformal homeomorphisms preserving $X$.  However, there is a hyperbolic structure $X_0$ on $\vT$ on which every quasi-conformal map is compactly supported.  For example, we may fix a pants decomposition $P = (c_1,c_2,\dots)$ of $\vT$ and give each curve $c_i$ length $i!$.  Any mapping class with infinite support will therefore neccesarily permute two curves whose lengths differ by an arbitrarily large multiple. A paper by Matsuzaki gives a similar construction \cite{Matsuzaki05}.  In this case, $\Mod(X_0)$ is a normal subgroup of $\Map(\vT)$; as we see in the next example.

\subsection*{The compactly supported mapping class group}
The group $\Map_c(\Sigma)$ is the subgroup of $\PMap(\Sigma)$ consisting of only the elements with compact support.  One may equip $\PMap(\Sigma)$ with the compact-open topology, in doing so it has been shown by Patel-Vlamis that $\PMap(\Sigma)$ is precisely the closure of $\Map_c(\Sigma)$ when $\Sigma$ has at most one end \emph{accumulated by genus} \cite[Theorem 4]{PV18}.  In the case of Cantor tree surface therefore, $\PMap(\vT) = \ol {\Map_c(\vT)}$.

\subsection*{The Torelli group}
The \emph{Torelli group} $\vI(\Sigma)$ is the kernel of the natural map $\Map(\Sigma) \to \Aut H_1(\Sigma,\ZZ)$.  As with $\Map(\Sigma)$ the automorphism group of $\vI(\Sigma)$ has already been proven to be $\Map^\pm(\Sigma)$ for all infinite-type surfaces by Aramayona-Ghaswala-Kent-McLeay-Tao-Winarski \cite{AGKMTW19}.  In the same paper, it is shown to be topologically generated by compactly supported elements.

\subsection*{The Johnson kernels}
The \emph{Johnson kernels} $\vJ_k(\Sigma)$ are the kernels of the homomorphism $\Map(\Sigma) \to \Aut \Gamma_1 / \Gamma_k$, where $\Gamma_k$ is a term in the lower central series of $\Gamma_1 = \pi_1(\Sigma)$.  Following the argument of Farb \cite{Farb06}[Theorem 5.10] we see that $\vJ_k(\Sigma)$ contains elements that are supported in four-holed spheres.  It is likely therefore that one could use the graph $\vS_4(\Sigma)$ to extend this result to all surfaces of infinite-type.

\subsection*{A non-example}
Funar-Kapoudjian define a \emph{rigid structure} on $\vT$; a pants decomposition of $\vT$ with specific \emph{seems} between each component \cite{FK04}.  A mapping class $f$ is \emph{asymptotic} if there exists a compact subsurface $S$ such that $f$ preserves the rigid structure in the complement of $S$.  The \emph{asymptotic} (or universal) mapping class group $\vB$ is the corresponding subgroup.  Two reasons $\vB$ has received attention are its strong connection to Thompson's group, and that it contains every genus zero mapping class group as a subgroup.  Furthermore, Funar-Kapoudjian show that $\vB$ is finitely presented \cite{FK04}[Theorem 1].

The subgroup $\vB$ however, is not normal in $\MCT$.  Indeed, suppose $g$ is asymptotic and swaps the infinite sets $P_1, P_2$ of curves contained in the rigid structure.  One may then take $f$ to be the product of twists about another set of curves which intersects $P_1$ infinitely many times and $P_2$ zero times.  Then the element $f^{-1}gf$ fails to fix the curves in $P_1,P_2$ and so is not asymptotic.

\subsection{Ivanov's metaconjecture}
The strategy of this paper draws inspiration from the famous result of Ivanov which, together with work of Korkmaz and Luo, states that if $\Sigma$ is of finite-type (other than finitely many sporadic cases) then $\Aut \vC(\Sigma) \cong \Map^{\pm}(\Sigma)$ \cite{Ivanov97}.  An algebraic characterisation of Dehn twists is then used to show that any automorphism of $\Map^\pm(\Sigma)$ is induced by an automorphism of $\vC(\Sigma)$, and that $\Aut  \Map(\Sigma) \cong \Map^{\pm}(\Sigma)$.

Farb-Ivanov and Brendle-Margalit then realised that by choosing complexes suited to other normal subgroups, they could adapt this strategy.  In doing so, they proved that the Torelli group and the first Johnson kernel are both geometric \cite{FI05}, \cite{BM04}.  This led to \emph{complexes of regions} which were used, first by Brendle-Margalit and then the author, to prove that many normal subgroups in the finite-type setting are geometric \cite{BM17}, \cite{McLeay17}, \cite{McLeay19}.

Prior to the definition of complexes of regions, a series of results
(for example \cite{Bowditch16}, \cite{BM04}, \cite{Irmak06}, \cite{Kida11}, \cite{KP10}, \cite{KP12}, \cite{Margalit04}, and \cite{MP13})
led to Ivanov make the following metaconjecture
\begin{conj}[Ivanov \cite{Farb06}]
Every object naturally associated to a surface $\Sigma$ and having sufficiently rich structure has $\Map^\pm(\Sigma)$ as its group of automorphisms.  Moreover, this can be proved by a reduction to the theorem about automorphisms of $\vC(\Sigma)$.
\end{conj}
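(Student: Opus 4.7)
Since this statement is explicitly a metaconjecture rather than a theorem, the objective cannot be a single formal proof but a uniform strategy that, given any sufficiently structured object $X$ functorially associated to $\Sigma$, produces a reduction to Ivanov's theorem $\Aut \vC(\Sigma) \cong \Map^\pm(\Sigma)$. My plan is first to settle on a working interpretation of \emph{sufficiently rich}: $X$ should carry a natural $\Map^\pm(\Sigma)$-action, and there should be an $\Aut(X)$-invariant way to extract, from purely $X$-intrinsic data, the isotopy class of an essential simple closed curve in $\Sigma$.

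The reduction itself would proceed in three steps. First, exhibit a canonical subset or family of substructures $V_X$ of $X$ together with a $\Map^\pm(\Sigma)$-equivariant map $V_X \to \vC(\Sigma)^{(0)}$; in practice this is the creative step, and it has been realised very differently in different cases -- via algebraic characterisations of Dehn twists inside normal subgroups such as $\vI(\Sigma)$ or $\vJ_k(\Sigma)$, via canonical vertex classes in complexes of curves, arcs, or regions, or via the subsurface-vertex correspondence used in this paper for $\vS_n$. Second, show that the incidence or adjacency relation on $V_X$ forced by $\Aut(X)$ matches the disjointness relation of $\vC(\Sigma)$, so that every $\varphi \in \Aut(X)$ descends to $\ol{\varphi} \in \Aut \vC(\Sigma)$. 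Third, apply Ivanov's theorem to realise $\ol{\varphi}$ by some $f \in \Map^\pm(\Sigma)$, and verify that $f$ and $\varphi$ agree on all of $X$ by exploiting that $V_X$ generates enough of $X$ under the $\Map^\pm(\Sigma)$-action to pin $\varphi$ down.

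The principal obstruction, and the structural reason the statement is a metaconjecture, is the first step above: one must characterise curve-like data from inside $X$, and no uniform recipe for doing this is known. Every successful instance in the literature -- Ivanov's original argument, Farb--Ivanov and Brendle--Margalit for the Torelli group and first Johnson kernel, the complexes of regions of Brendle--Margalit and the author, and the present paper's use of $\vS_n$ -- has required a bespoke algebraic or combinatorial characterisation of Dehn twists or simple closed curves. A genuine proof of the metaconjecture would demand a precise formalisation of \emph{sufficiently rich structure}, perhaps as a definability or generation hypothesis on the $\Map^\pm(\Sigma)$-action on $X$ that forces the canonical map $V_X \to \vC(\Sigma)^{(0)}$ to exist and be $\Aut(X)$-equivariant, under which step one becomes automatic. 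Absent such a formalisation, the best one can hope for is to validate the metaconjecture one natural object at a time, of which Theorems \ref{thm_normal} and \ref{thm_graph} are a further chapter.
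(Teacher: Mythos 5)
The paper offers no proof of this statement: it is Ivanov's metaconjecture, recorded as a conjecture, and the paper's contribution is to verify further instances of it (Theorems \ref{thm_normal} and \ref{thm_graph}) rather than to prove it in general. Your assessment is correct and matches the paper's stance exactly --- you rightly identify that the statement is not formally provable as phrased, and your three-step reduction template (extract curve-like data from $X$, match adjacency to disjointness, apply the curve-graph theorem and pin down the automorphism) is precisely the strategy the paper executes for $\vS_n$ and for normal subgroups $N$, with the bespoke first step realised here by inseparable spheres and vertex pairs.
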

The results of Brendle-Margalit and the author therefore resolve the metaconjecture for an infinite family of complexes.  We also note that recent work of Agrawal-Aougab-Chandran-Loving-Oakley-Shapiro-Xiao resolves the metaconjecture for a different infinite family of complexes; namely curve graphs where adjacency is defined by pairs of curves with intersection of at least $k$ \cite{AACLOSX}.  A rephrasing and generalisation of a conjecture of Brendle-Margalit \cite{BM17}[Conjecture 1.5] is as follows
\begin{conj}\label{conj_connected}
Let $\vC_R(\Sigma)$ be a complex of regions, and let $N$ be a normal subgroup of $\Map^\pm(\Sigma)$.
\begin{enumerate}
\item The natural map $\Map^{\pm}(\Sigma) \to \Aut \vC_R(\Sigma)$ is an isomorphism if and only if $\vC_R(\Sigma)$ is connected and admits no exchange automorphisms.
\item The natural map $\Map^{\pm}(\Sigma) \to \Aut N$ is an isomorphism if and only if $N$ contains elements whose supports $L,R \subset \Sigma$ are disjoint and separated by a pair of pants.
\end{enumerate}
\end{conj}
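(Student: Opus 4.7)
The forward direction is the easier half. If $\vC_R(\Sigma)$ is disconnected, the symmetric group on its connected components embeds into $\Aut \vC_R(\Sigma)$ and typically contains permutations not realised by any homeomorphism (distinct components generally carry distinguishable topological data, but arbitrary permutations of components ignore this). If $\vC_R(\Sigma)$ admits an exchange automorphism, that automorphism is non-geometric by definition. For the reverse direction I would follow the Ivanov--Brendle--Margalit--McLeay template: given $\phi \in \Aut \vC_R(\Sigma)$, first show that $\phi$ preserves the topological type of the subsurface associated to each vertex (this is precisely the step where connectedness and absence of exchange automorphisms are used, exactly as in the finite-type complex-of-regions arguments), then characterise a single curve as a suitable intersection of vertices of $\vC_R(\Sigma)$ to obtain an induced automorphism of $\vC(\Sigma)$, and finally invoke the infinite-type analogue of Ivanov's theorem (after Hern\'andez--Morales--Valdez and Bavard--Dowdall--Rafi) to realise $\phi$ by a mapping class. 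The $n = 2$ reduction carried out in this paper for $\vT$ is the prototype.

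\textbf{Plan for Part (2).} For the ``if'' direction, fix $f_L, f_R \in N$ with disjoint supports $L, R \subset \Sigma$ separated by a pair of pants $P$. I would associate to $N$ a complex of regions $\vC^N_R(\Sigma)$ whose vertices are (isotopy classes of) supports of $\Map^\pm(\Sigma)$-conjugates of $f_L$, with edges given by disjointness. The separating pair of pants supplies enough room to conjugate $f_L$ onto arbitrary regions of $\Sigma$ disjoint from any prescribed support, forcing $\vC^N_R(\Sigma)$ to be connected, while the two-sided structure of $P$ should obstruct exchange automorphisms. Given $\Phi \in \Aut N$, one shows that $\Phi$ permutes these supports by characterising them algebraically via centralisers or commuting lattices (in the style of Farb--Ivanov and Brendle--Margalit), so $\Phi$ induces an automorphism of $\vC^N_R(\Sigma)$, and Part (1) closes the argument. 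For the converse, if no such pair exists in $N$, every two supports essentially overlap, so $N$ is sufficiently constrained that one expects to produce a non-geometric outer automorphism, for instance via grading-type or scaling constructions on a single $\Map^\pm(\Sigma)$-conjugacy class of small-support elements.

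\textbf{Main obstacle.} The principal hurdle lies in the reverse implication of Part (2): extracting topological information about supports from the abstract group $N$ without reference to the ambient surface. In the finite-type setting this is handled by detecting Dehn twists through the centraliser lattice, but for infinite-type $\Sigma$ the group $N$ may contain exotic elements (infinite multitwists, handle shifts, end-accumulating homeomorphisms) that complicate the algebra-to-topology dictionary. Paired with the need to develop a robust theory of exchange automorphisms for complexes of regions built from infinite-type subsurfaces---so as to rule out pathological $\vC_R(\Sigma)$ to which Part (1) would otherwise not apply---this is where genuinely new ideas beyond the present paper seem required.
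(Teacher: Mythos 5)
The statement you are trying to prove is labelled as a conjecture in the paper (a rephrasing and generalisation of Brendle--Margalit's Conjecture 1.5), and the paper offers no proof of it; it is stated as an open problem motivating the work, with the main theorems (for the Cantor tree surface and the graphs $\vS_n$) serving only as evidence in particular cases. So there is nothing in the paper to compare your argument against, and your submission is correctly described as a plan rather than a proof --- which you essentially acknowledge in your ``Main obstacle'' paragraph.

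As a plan, it is a faithful description of the Ivanov--Brendle--Margalit template, but every genuinely hard step is deferred. In Part (1), the step ``show that $\phi$ preserves the topological type of the subsurface associated to each vertex'' and the step ``characterise a single curve as a suitable intersection of vertices'' are exactly where the known finite-type proofs require hypotheses strictly stronger than connectivity and the absence of exchange automorphisms (for instance, conditions guaranteeing the complex contains vertices of small complexity from which annuli can be reconstructed); the content of the conjecture is precisely that these extra hypotheses are unnecessary, and your plan gives no idea for removing them. In Part (2), the ``if'' direction presupposes an algebraic characterisation of supports valid for an arbitrary normal subgroup $N$ of an arbitrary (possibly infinite-type) $\Map^\pm(\Sigma)$ --- this is the entire technical content of Section 3 of the present paper in the single case $\Sigma = \vT$, and it uses properties special to $\vT$ (e.g.\ Proposition \ref{prop_finite} reducing to finitely supported elements, and the homogeneity that makes all the spheres $V_{(a,b)}$ homeomorphic). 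The ``only if'' direction of Part (2) is asserted only as an expectation, with no construction of a non-geometric automorphism. None of this is a criticism of the strategy, which is the right one, but it should be presented as a research programme, not a proof.
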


\subsection*{The leap to big mapping class groups}
Of course, there is no reason that the metaconjecture must be restricted to surfaces of finite-type.  Indeed, it has been shown by Hernandez-Morales-Valdez \cite{HMV19} and Bavard-Dowdall-Rafi \cite{BDR18} that the original result for the curve graph also holds for any surface of infinite-type.  Similarly, the result for \emph{big} Torelli groups noted above relies on the equivalent result for the \emph{Torelli complex}, also adapted from the finite-type case \cite{AGKMTW19}, \cite{BM04}.

%\begin{figure}[t]
%\centering
%\includegraphics[scale=0.25]{FourSurfaces}
%\caption{Four infinite-type surfaces; Blooming Cantor tree, Loch Ness Monster, Ladder, and Flute}
%\label{FourSurfaces}
%\end{figure}

\subsection{Outline of the paper}
We begin by proving Theorem \ref{thm_graph} in Section \ref{section_graph}.  In Section \ref{section_elements} we study the elements of a normal subgroup and give an algebraic characterisation of elements supported on $n$-holed spheres.  Finally in Section \ref{section_automorphisms} we complete the proof of Theorem \ref{thm_normal}.

We note here that this algebraic characterisation can be adapted for any finite-index subgroup of $N$, and therefore the proof of Theorem \ref{thm_normal} can be modified for the abstract commensurator group of $N$.  This is omitted from the paper to simplify some of the definitions in Section \ref{section_elements}.

\subsection*{Acknowledgements}
The author would like to thank Javier Aramayona, Tara Brendle, Ty Ghaswala, Autumn Kent, Dan Margalit, Jing Tao, and Becca Winarski for helpful discussions on the techniques used in this paper.
He is grateful to Nick Vlamis for comments on an earlier draft.
He was supported by the COALAS FNR AFR bilateral grant.

\section{The graphs}\label{section_graph}

In this section we will prove Theorem \ref{thm_graph} for $n\ge3$ by studying its relationship to the curve graph.  To that end, we note that if $\vL_R$ is the link of a vertex $R \in \vS_n$ then there is a notion of \emph{sides} of $\vL_R$, first introduced by Ivanov \cite{Ivanov97}.

For any $L \in \vS_n$ disjoint from $R$ we can define the set $\vL_R(L) \subset \vL_R$ to be all spheres that lie in the same component of $\vT \sm R$ as $L$.  One can see that $S \in \vL_R(L)$ if and only if there exists a vertex in $\vL_R$ that is nonadjacent to both $S$ and $L$.

\subsection*{Inseparable spheres}  We say that two disjoint spheres $L,R \in \vS_n$ are \emph{inseparable} if there exists a unique curve $c$ belonging to the boundaries of both $L$ and $R$, see Figure \ref{inseparable}.

\begin{lem}\label{lem_inseparable}
Two vertices $L,R \in \vS_n$ are inseparable spheres if and only if they span an edge and $\vL_L(R) \cap \vL_R(L)$ is empty.
\end{lem}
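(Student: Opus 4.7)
The plan is to prove the two directions of the equivalence separately, leveraging the planarity of $\vT$ throughout.

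For the forward direction, assume $L$ and $R$ are inseparable, sharing the unique boundary curve $c$. Their interiors are disjoint, so they span an edge of $\vS_n$. Let $V_L$ denote the component of $\vT \setminus L$ containing $R$, and $U_R$ the component of $\vT \setminus R$ containing $L$. The union $L \cup R$, obtained by gluing two $n$-holed spheres along a single curve, is a $(2n-2)$-holed sphere. Since $\vT$ is planar, each boundary curve of $L \cup R$ separates $\vT$ and borders a single component of the complement. Consequently $V_L$ is the union of $R$ with the $n-1$ components of $\vT \setminus (L \cup R)$ adjacent to the non-$c$ boundary curves of $R$, and $U_R$ admits the symmetric description. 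These are disjoint open subsets of $\vT$, so any $S \in \vL_L(R) \cap \vL_R(L)$ would lie in $V_L \cap U_R = \emptyset$, which is impossible.

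For the converse I argue by contrapositive. Suppose $L, R$ span an edge but are not inseparable. An Euler characteristic computation shows that gluing two $n$-holed spheres along $k \ge 1$ boundary curves produces a surface of genus $k-1$; planarity of $\vT$ then forces $k \le 1$, so $L$ and $R$ must share no boundary curves at all. Let $c_L$ and $c_R$ be the boundary curves of $L$ and $R$ respectively whose across-sides contain the other subsurface; these are disjoint, essential, and non-isotopic (else an isotopy of $L$ would produce a shared boundary). Let $Y$ be the component of $\vT \setminus (L \cup R)$ cobounded by $c_L$ and $c_R$. Because the end space of $\vT$ is a Cantor set with no isolated points, any nonempty clopen subset is itself a Cantor set; the ends of $\vT$ lying in $Y$ form such a subset, nonempty since $c_L \not\sim c_R$ prevents $Y$ from being an annulus. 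Thus $Y$ has a Cantor set of ends and contains an $n$-holed sphere $S$. By construction $S$ is disjoint from $L$ and $R$, and lies both in $V_L$ (reached from $R$ by crossing $c_R$) and in $U_R$ (reached from $L$ by crossing $c_L$), so $S \in \vL_L(R) \cap \vL_R(L)$.

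The main obstacle I anticipate is the end-space argument for $Y$, which is the one step that genuinely uses the specific topology of the Cantor tree rather than just its planarity. The forward direction is essentially bookkeeping once the complementary-region decomposition of $\vT \setminus (L \cup R)$ is understood, though one must take care to parse the definition of $\vL_R(L)$ correctly when $L$ and $R$ share a boundary curve.
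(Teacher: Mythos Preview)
Your proof is correct and follows essentially the same route as the paper's: both identify the ``in-between'' component of $\vT \setminus (L \cup R)$ bounded by one curve from $\partial L$ and one from $\partial R$, and observe that $\vL_L(R) \cap \vL_R(L)$ consists precisely of the $n$-holed spheres lying there, which is empty exactly when those two boundary curves coincide. The paper compresses both directions into this single observation and declares the last step ``clear''; you unpack it, supplying the Euler-characteristic bound $k\le 1$ and the end-space argument that a non-annular two-holed planar subsurface of $\vT$ carries a Cantor set of ends and hence an $n$-holed sphere---details the paper omits but which are exactly what one needs.
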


\begin{proof}
Since $L$ and $R$ are disjoint, there exists a component $X$ of $\vT \sm \{L,R\}$ such that $\partial X = \{c_L, c_R\}$ where $c_L \in \partial L$ and $c_R \in \partial R$.  By definition, the vertices of $\vL_L(R) \cap \vL_R(L)$ are exactly the vertices contained in $X$.  It is clear that $X$ contains no vertices of $\vS_n$ if and only if $c_L = c_R$.
\end{proof}

The following result implies that the map from Theorem \ref{thm_graph} is injective.  Moreover, it will be used to show injectivity for map from Theorem \ref{thm_normal}.

\begin{lem}\label{lem_inj}
If $f \in \MCT$ fixes every $n$-holed sphere then $f$ is the identity.
\end{lem}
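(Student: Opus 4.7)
The plan is to reduce the statement to the $n=2$ case, where $\vS_2 = \vC(\vT)$ is the curve graph and the lemma amounts to the injectivity of the natural map $\MCT \to \Aut \vC(\vT)$ — a consequence of the isomorphism established by Hernandez-Morales-Valdez \cite{HMV19} and Bavard-Dowdall-Rafi \cite{BDR18}. Thus for $n\geq 3$ it suffices to show that any $f\in\MCT$ which fixes the isotopy class of every $n$-holed sphere also fixes the isotopy class of every essential simple closed curve in $\vT$.

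Fix an essential simple closed curve $c$. Since $\vT$ has genus zero, $c$ separates $\vT$ into two components $L$ and $R$, each homeomorphic to a once-bordered Cantor tree and hence itself of infinite topological complexity. In each of $L$ and $R$, build an $n$-holed sphere containing $c$ as one of its boundary components by gluing together $n-2$ pairs of pants, starting from a pair of pants one of whose cuffs is $c$. Call these subsurfaces $R_1 \subset L$ and $R_2 \subset R$. The remaining $n-1$ boundary components of $R_i$ lie in the interior of its respective side of $c$, so
\[
\partial R_1 \cap \partial R_2 = \{c\}.
\]

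Since $f(R_i)$ is isotopic to $R_i$ for $i=1,2$, the mapping class $f$ permutes the isotopy classes of boundary components of each $R_i$; that is, $f(\partial R_i) = \partial R_i$ as unordered sets of curves. Therefore
\[
f(c) \;\in\; f(\partial R_1) \cap f(\partial R_2) \;=\; \partial R_1 \cap \partial R_2 \;=\; \{c\},
\]
and so $f(c) = c$. As $c$ was arbitrary, $f$ acts trivially on $\vC(\vT) = \vS_2$, whence $f$ is the identity by the $n=2$ case. The only point that requires any care is verifying that one can actually realise $R_1$ and $R_2$ with the required disjoint-boundary property for \emph{every} curve $c$, but this is immediate from the homogeneity of $\vT$ together with the fact that both sides of any separating curve are again infinite-type subsurfaces with many pants, and I do not expect it to be a serious obstacle.
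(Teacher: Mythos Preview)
Your argument is correct and is essentially the same as the paper's: the paper calls your $R_1,R_2$ a pair of \emph{inseparable spheres} (two adjacent vertices of $\vS_n$ sharing a unique boundary curve), and uses the identical intersection-of-boundaries trick to deduce $f(c)=c$. The only cosmetic difference is that the paper then invokes the Alexander method for infinite-type surfaces directly rather than phrasing the conclusion as the $n=2$ case.
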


\begin{proof}
If $c_1,\dots, c_n$ are the boundary components of an $n$-holed sphere then $f(c_i)=c_j$.  For any curve $c$ we can find inseparable spheres so that $c$ is the unique curve in the boundary of both.  It follows that $f(c)=c$.  Since $c$ is arbitrary, we have that $f$ fixes every curve and so by the Alexander method for infinite-type surfaces \cite{HMV19}, $f$ is the identity.
\end{proof}

We now prove the map $\MCT \to \Aut \vS_n$ is an isomorphism.

\begin{proof}[Proof of Theorem \ref{thm_graph}]
We assume that $n\ge 3$.  By Lemma \ref{lem_inj} the map is injective.  We now show that there is an injective map $\Aut \vS_n \to \Aut \vC(\vT)$.

Lemma \ref{lem_inseparable} shows that every automorphism $\phi$ of $\vS_n$ induces a permutation of the vertices of $\vC(\vT)$.  This permutation can be extended to an automorphism $\phi_* \in \Aut \vC(\vT)$, as two curves are disjoint if and only if they are boundary components of some $n$-holed sphere.  The fact that this map is a well defined homomorphism is left as an exercise for the reader.

If $\phi_*$ is the identity, then $\phi$ fixes every pair of inseparable spheres.  It follows that $\phi$ is the identity and so the map $\Aut \vS_n \to \Aut \vC(\vT)$ is injective.  We now have that
\[
\MCT \hookrightarrow \Aut \vS_n \hookrightarrow \Aut \vC(\vT) \cong \MCT
\]
is the identity, and so all the maps are isomorphisms.
\end{proof}

\section{Elements of normal subgroups}\label{section_elements}
In order to prove Theorem \ref{thm_normal} we will give a characterisation of elements of a normal subgroup $N$ whose supports are $n$-holed spheres.  The first step is to characterise a subset of $N$ containing only elements of finite support.  Of course, it may be that $N$ only contains such elements, in which case $N$ is countable.  Conversely, a result of Bavard-Dowdall-Rafi implies that if $N$ contains an element of infinite support, then it contains uncountably many such elements \cite[Proposition 4.2]{BDR18}.  We will adapt their proof to give the characterisation we require.

\begin{prop}\label{prop_finite}
If $f \in N$ has a countable conjugacy class then $f^2$ has finite support.
\end{prop}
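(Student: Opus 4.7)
We argue the contrapositive: assuming $f^2$ has non-compact (``infinite'') support, we construct uncountably many distinct $\MCT$-conjugates of $f$. This follows the template of \cite[Proposition 4.2]{BDR18}, modified to accommodate the $f$-versus-$f^2$ distinction, which is needed because, for instance, an involution swapping two ends of $\vT$ has infinite support while its square is trivial.

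The first step is to extract a useful family of test curves. Fix a compact exhaustion $K_1\subset K_2\subset\cdots$ of $\vT$. Since $f^2$ admits no compactly supported representative, the Alexander method of \cite{HMV19} implies that each complement $\vT\sm K_n$ contains an essential simple closed curve moved by $f^2$. Inductively, exploiting the fact that $\vT$ continues to branch in the complement of every compactum, we pick curves $\gamma_n\subset\vT\sm K_n$ such that the enlarged family $\{\gamma_n\}\cup\{f(\gamma_n)\}_{n\in\NN}$ is pairwise disjoint, pairwise non-isotopic, and locally finite. The mechanism is to push $\gamma_{n+1}$ sufficiently deep into $\vT$ that $f(\gamma_{n+1})$ also lies beyond the compact region carrying every previously chosen curve and its $f^{\pm 1}$-image. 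Note that $f^2(\gamma_n)\not\simeq\gamma_n$ forces $f(\gamma_n)\not\simeq\gamma_n$, so $\gamma_n$ and $f(\gamma_n)$ are genuinely distinct isotopy classes.

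For each $A\subseteq\NN$, local finiteness yields a well-defined mapping class
\[
T_A:=\prod_{n\in A}T_{\gamma_n}\in\MCT,
\]
and we set $f_A:=T_A\, f\, T_A^{-1}\in N$. The plan is to show that $A\mapsto f_A$ is injective, producing $2^{\aleph_0}$ distinct conjugates of $f$ and contradicting countability of the conjugacy class. By the conjugation relation $fT_\alpha f^{-1}=T_{f(\alpha)}$, the equality $f_A=f_{A'}$ is equivalent to $T_{A'}^{-1}T_A$ commuting with $f$, which unpacks to
\[
\prod_{n\in A\sm A'}T_{\gamma_n}\cdot\prod_{n\in A'\sm A}T_{\gamma_n}^{-1}=\prod_{n\in A\sm A'}T_{f(\gamma_n)}\cdot\prod_{n\in A'\sm A}T_{f(\gamma_n)}^{-1}.
\]
Both sides lie in the free abelian subgroup of $\MCT$ generated by Dehn twists about the pairwise disjoint, pairwise non-isotopic family $\{\gamma_n\}\cup\{f(\gamma_n)\}$, and the two sides involve disjoint sets of generators, so each side must separately be trivial. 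This forces $A\triangle A'=\emptyset$, i.e.\ $A=A'$.

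The main obstacle will be the curve extraction in the second paragraph: arranging disjointness, non-isotopy, and local finiteness of the enlarged family $\{\gamma_n\}\cup\{f(\gamma_n)\}$ while ensuring each $\gamma_n$ is moved by $f^2$. This relies on the homogeneity of $\vT$---every compact complement still contains infinitely many ends---together with the fact that $f$, being a homeomorphism, maps sufficiently deep curves to sufficiently deep curves. Once the family is in hand, the algebraic independence of Dehn twists about a locally finite, pairwise disjoint, pairwise non-isotopic collection is essentially classical, and the injectivity of $A\mapsto f_A$ follows.
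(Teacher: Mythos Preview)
Your argument produces uncountably many \emph{$\MCT$-conjugates} of $f$, but the proposition---as the paper states, proves, and later uses---concerns the conjugacy class \emph{in $N$}. This matters: the set $\vF_N$ of squares of elements with countable conjugacy class must be preserved by an arbitrary automorphism of $N$, and such an automorphism respects $N$-conjugacy, not $\MCT$-conjugacy. Your conjugating elements $T_A$ have no reason to lie in $N$, so the family $\{f_A\}$ need not be contained in a single $N$-conjugacy class. The paper's remedy is to replace $T_A$ by the commutator
\[
h_\epsilon \;=\; f^{-1}\,T_\vC^{-\epsilon}\,f\,T_\vC^{\epsilon} \;=\; [f,\,T_\vC^{\epsilon}],
\]
which lies in $N$ because $f\in N$ and $N$ is normal, and then to set $f_\epsilon = h_\epsilon^{-1} f h_\epsilon$. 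The distinguishing computation is accordingly more delicate: one compares $f_\epsilon$ and $f_{\epsilon'}$ via a product of twists about $c_i$, $f(c_i)$, \emph{and} $f^{-1}(c_i)$, and it is exactly the hypothesis $f(c_i)\ne f^{-1}(c_i)$ (equivalently $f^2(c_i)\ne c_i$) that makes this product nontrivial. In your version, by contrast, the condition $f^2(\gamma_n)\not\simeq\gamma_n$ is used only to deduce $f(\gamma_n)\not\simeq\gamma_n$; the genuine role of $f^2$ never appears, which is a symptom of the gap.

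A secondary point: you demand that the whole family $\{\gamma_n\}\cup\{f(\gamma_n)\}$ be pairwise disjoint and pairwise non-isotopic, in particular that each $\gamma_n$ be disjoint from its own image $f(\gamma_n)$. The paper's construction (following \cite{BDR18}) only asks that $f(c_i)$ and $c_j$ be disjoint for $i\ne j$, allowing $c_i$ and $f(c_i)$ to intersect; this weaker requirement is easier to arrange and is all that the commutator computation needs.
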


\begin{proof}
First, suppose that $f^2$ fixes every curve in the complement of a compact subsurface $S$.  By the Alexander method for infinite-type surfaces, we have that the support of $f^2$ is contained in $S$.  It suffices then to show that if $f$ has infinite support, and $f^2$ does not satisfy the above condition, then the conjugacy class in $N$ of $f$ is uncountable.

To that end, let $\vC = (c_1, c_2, \dots)$ be a set of curves such that $f(c_i)$ and $c_j$ are disjoint, except possibly when $i=j$, \cite[Proposition 4.2]{BDR18}.  Furthermore, we may choose each $c_i$ such that $f(c_i)$ and $f^{-1}(c_i)$ are distinct, as otherwise $f^2$ would fix every curve in the complement of a compact subsurface.  For each $\epsilon = (\epsilon_1, \epsilon_2, \dots )$, where $\epsilon_i \in \{0,1\}$ let $T_\vC^\epsilon = \prod T_{c_i}^{\epsilon_i}$.  Now let
\[
h_\epsilon = f^{-1} T_\vC^{-\epsilon} f T_\vC^\epsilon \eand f_\epsilon = h_\epsilon^{-1} f h_\epsilon.
\]

Note that $h_\epsilon$ is in $N$ and so $f_\epsilon$ is a conjugate in $N$ of $f$.  Now, if $f(\vC) = (f(c_1), f(c_2), \dots)$ then $h_\epsilon = T_{f^{-1}(\vC)}^{-\epsilon}T_\vC^{\epsilon} = \prod T_{f^{-1}(c_i)}^{-\epsilon_i}T_{c_i}^{\epsilon_i}$.

For distinct $\epsilon, \epsilon'$ we now have
\begin{align*}
h_{\epsilon'} f_\epsilon f_{\epsilon'}^{-1} h_{\epsilon'}^{-1} &= h_{\epsilon'} h_{\epsilon}^{-1} ( f h_\epsilon f^{-1} ) ( f h_{\epsilon'}^{-1} f^{-1}) \\ 
&= (T_{f^{-1}(\vC)}^{-\epsilon'} T_\vC^{\epsilon'})  (T_{\vC}^{-\epsilon} T_{f^{-1}(\vC)}^{\epsilon})  (T_{\vC}^{-\epsilon} T_{f(\vC)}^{\epsilon})   (T_{f(\vC)}^{-\epsilon'} T_{(\vC)}^{\epsilon'}) \\  
&= \prod (T_{f^{-1}(c_i)}^{-1}T_{c_i})^{(\epsilon'_i - \epsilon_i)} (T_{c_i}^{-1}T_{f(c_i)})^{(\epsilon_i -\epsilon'_i)}.
\end{align*}

Since $f(c_i) \neq f^{-1}(c_i)$ for all $i$ we have that the product is not the identity, and therefore $f_\epsilon \neq f_{\epsilon'}$.
\end{proof}

We note that there is likely an argument to say that every element of infinite support has an uncountable conjugacy class; although our characterisation does not depend on this.  Implicit in the above proof is the fact that every normal subgroup contains elements with finite support.  Following from Proposition \ref{prop_finite} we focus our attention solely on such elements.

To that end, we denote by $\vF_N$ the squares of all elements of $N$ with countable conjugacy classes.  In particular, we will make use of elements which have been called \emph{pure} in the literature \cite{Ivanov92}.  This is not equivalent to an element that fixes the ends of the surface, which is also called \emph{pure}.

\subsection*{Pure (in the sense of Ivanov)}
We refer to an element $f$ with finite support as \emph{pure (in the sense of Ivanov)}, or PSI, if 
\begin{enumerate}
\item $f$ can be written as the product $f_1f_2\dots f_s$,
\item each $f_i$ is a power of a Dehn twist or a partial pseudo-Anosov, and
\item $f_i$ and $f_j$ have disjoint support for all $i$ and $j$.
\end{enumerate}
The mapping classes $f_1, f_2, \dots f_s$ are called the \emph{components} of $f$.  Throughout this section we will use the three following results, often without mention.  For more detail, see \cite{Ivanov92}.

\begin{fact}\label{fact_powers}
If $f$ has finite support then there exists some $k>0$ such that $f^k$ is PSI.
\end{fact}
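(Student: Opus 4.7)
The plan is to reduce the statement to the analogous well-known result for finite-type surfaces, originally due to Ivanov \cite{Ivanov92}. Since $f$ has finite support, by definition there exists a compact (and hence finite-type) subsurface $S \subset \vT$ such that $f$ restricts to the identity on $\vT \sm S$. After enlarging $S$ if necessary, we may assume that each boundary component of $S$ is an essential simple closed curve of $\vT$, so that $f$ may be viewed as a mapping class $f_S$ of the finite-type surface $S$ (fixing $\partial S$ pointwise up to isotopy).

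Next I would apply Ivanov's theorem on the existence of pure powers in finite-type mapping class groups: there is an integer $k>0$, depending only on the topology of $S$, such that for every $g \in \Map(S)$ the power $g^k$ admits a canonical reduction system $\sigma \subset S$ of disjoint simple closed curves, and $g^k$ restricts on each component of $S \sm \sigma$ to either the identity or a pseudo-Anosov homeomorphism, while acting as a (possibly trivial) power of a Dehn twist in a neighbourhood of each curve of $\sigma$. Applied to $f_S$, this yields a decomposition
\[
f_S^k = f_1 f_2 \cdots f_s,
\]
where each $f_i$ is either a power of a Dehn twist about a curve of $\sigma$ or a partial pseudo-Anosov supported on a component of $S \sm \sigma$, and the supports of the $f_i$ are pairwise disjoint.

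Finally, since $f^k$ is the identity on $\vT \sm S$, each $f_i$ (extended by the identity on $\vT \sm S$) is a well-defined element of $\MCT$ whose support is a finite-type subsurface of $\vT$. Their supports remain pairwise disjoint inside $\vT$, and they commute, so
\[
f^k = f_1 f_2 \cdots f_s
\]
exhibits $f^k$ as PSI in the sense defined above. The only delicate point is purely bookkeeping: one must check that the essentiality of the curves of $\sigma$ and the pseudo-Anosov pieces in $S$ remain essential/pseudo-Anosov when viewed inside $\vT$, which is immediate because $S$ is a subsurface with essential boundary and because the homeomorphism type of each piece is unchanged by the inclusion. No part of the argument is genuinely hard; the main obstacle is simply invoking the finite-type theorem with the correct choice of subsurface $S$.
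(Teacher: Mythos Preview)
The paper does not give its own proof of this fact; it is stated as a known result with a reference to Ivanov's monograph \cite{Ivanov92}. Your proposal is exactly the argument one would extract from that reference: restrict to a finite-type subsurface $S$ containing the support, apply the finite-type Nielsen--Thurston theory (specifically, the existence of a finite-index pure subgroup of $\Map(S)$) to obtain the decomposition of $f_S^k$, and then observe that the pieces extend by the identity to give the required PSI decomposition in $\MCT$. This is correct and is precisely what the paper has in mind.
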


\begin{fact}\label{fact_commute}
Let $f$ and $g$ be two commuting PSI elements.  Then $f_i$ and $g_j$ have disjoint support, or $f_i$ and $g_j$ are powers of the same partial pseudo-Anosov for all $i,j$.
\end{fact}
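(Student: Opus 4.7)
The plan is to compare the Thurston canonical forms of the two commuting PSI elements. Associated to $f$ is its canonical reduction system $\sigma_f$, consisting of the twist curves of its Dehn-twist components together with the boundary curves of the supports of its partial pseudo-Anosov components. The components of $\vT \setminus \sigma_f$ are then (the interiors of) the supports of the $f_i$'s together with subsurfaces on which $f$ acts trivially, and similarly for $g$ one obtains $\sigma_g$ and the supports of the $g_j$'s. Everything relevant takes place in a finite-type subsurface, since $f$ and $g$ both have finite support.

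First I would show that $g$ fixes $\sigma_f$ curve-by-curve and preserves each component support setwise. Since $[f,g]=1$, conjugation by $g$ sends $\sigma_f$ to itself and permutes the supports of the $f_i$. If this permutation were nontrivial, then $g$ would have to swap two disjoint subsurfaces, which is incompatible with the PSI normal form: a product of commuting Dehn-twist powers and partial pseudo-Anosovs on pairwise disjoint supports cannot induce a nontrivial permutation of two disjoint regions, because such a swap has an order-two component absent from the normal form. Thus $g$ fixes each curve of $\sigma_f$ and each complementary piece, and by symmetry $f$ does the same for $\sigma_g$. A standard disjointness argument (as in Ivanov) then realises $\sigma_f$ and $\sigma_g$ disjointly, so each $\supp g_j$ is a union of complementary components of $\sigma_f$; since $\supp g_j$ is either an annulus or a connected finite-type subsurface, this forces $\supp g_j \cap \supp f_i$ to be either empty or to coincide with $\supp f_i$ (and simultaneously with $\supp g_j$).

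In the latter case the restrictions of $f_i$ and $g_j$ to their common support commute in the mapping class group of that finite-type subsurface. If $f_i = T_c^{k}$, the common support is an annular neighborhood of $c$, whose mapping class group is infinite cyclic generated by $T_c$, so $g_j = T_c^m$ and the two are powers of the same twist. If instead $f_i$ is a partial pseudo-Anosov $\psi$, then by McCarthy's centralizer theorem the centralizer of $\psi$ in the mapping class group of the support is virtually cyclic, generated up to finite order by the underlying pseudo-Anosov; since $g_j$ is PSI it has no finite-order factor, so $g_j$ is a power of $\psi$. The main obstacle is the purity step above: one must argue carefully that being PSI prevents $g$ from permuting components of $f$, and this is the place where ``pure in the sense of Ivanov'' does the essential work by forbidding the finite-order behavior needed to swap disjoint subsurfaces.
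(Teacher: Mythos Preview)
The paper does not prove this statement: it is recorded as a \emph{Fact} with the remark ``For more detail, see \cite{Ivanov92}.'' So there is no in-paper proof to compare against; your sketch is essentially a reconstruction of Ivanov's original argument via canonical reduction systems, and the overall architecture (invariance of $\sigma_f$ under $g$, purity forcing $g$ to fix each reducing curve rather than merely the multicurve, and McCarthy's description of centralizers of pseudo-Anosovs on the overlapping piece) is the right one.

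Two small points of looseness are worth tightening. First, your assertion that ``each $\supp g_j$ is a union of complementary components of $\sigma_f$'' is not the statement you actually need or prove: what follows from purity is that no curve of $\sigma_f$ is essential in the interior of a pseudo-Anosov support $\supp g_j$, so $\supp g_j$ is \emph{contained in} a single complementary component of $\sigma_f$; the symmetric containment then gives equality of supports when they meet. Second, your justification for why a PSI element cannot permute the pieces of $\sigma_f$ (``such a swap has an order-two component absent from the normal form'') is morally correct but informal; the clean argument is that a pure element preserving a multicurve setwise must fix each component, since any curve of the multicurve lying in a pseudo-Anosov piece would have a finite orbit under a pseudo-Anosov, which is impossible. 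Finally, note that the paper's phrasing ``powers of the same partial pseudo-Anosov'' is slightly informal: your twist--twist case (both $f_i$ and $g_j$ powers of the same $T_c$) is genuinely part of the conclusion, and you are right to include it.
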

If $f \in \vF_N$ then we write $S_f$ for its support and $f^g$ for the conjugate $g^{-1} f g$.
\begin{fact}
If $f$ is PSI and $g$ is a mapping class, then $S_{f^g} = g(S_f)$.
\end{fact}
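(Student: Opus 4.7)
The proof is a routine verification that unwinds the definitions, and I would carry it out in three steps.

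First, I would reduce to the componentwise case using the PSI decomposition. Write $f = f_1 \cdots f_s$, where the components $f_i$ have pairwise disjoint supports and each $f_i$ is a power of a Dehn twist or a partial pseudo-Anosov. Pick representing homeomorphisms $\phi_i$ of the $f_i$ that are the identity outside a small closed neighborhood of $S_{f_i}$ and with pairwise disjoint supports, and pick a representative $\psi$ of $g$. Then $\phi := \phi_1 \cdots \phi_s$ represents $f$, and $\psi^{-1}\phi\psi = \prod_i \psi^{-1}\phi_i\psi$ represents $f^g$, with the homeomorphisms $\psi^{-1}\phi_i\psi$ still having pairwise disjoint supports since $\psi^{-1}$ is a homeomorphism. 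Taking isotopy classes, $f^g = \prod f_i^g$ is a PSI decomposition of $f^g$, so $S_{f^g} = \bigsqcup_i S_{f_i^g}$. Since $g(S_f) = \bigsqcup_i g(S_{f_i})$ as well, it suffices to prove $S_{f_i^g} = g(S_{f_i})$ for a single component.

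Second, I would treat the two kinds of components directly. For a Dehn twist component $f_i = T_{c_i}^{k}$, the standard identity $\psi^{-1}T_{c_i}^{k}\psi = T_{\psi^{-1}(c_i)}^{k}$ shows that $f_i^g$ is a twist power about an image curve, and its support is an annular neighborhood of that image, which is precisely $g(S_{f_i})$. For a partial pseudo-Anosov component $f_i$ with connected support $R_i$, the representative $\psi^{-1}\phi_i\psi$ of $f_i^g$ is the identity outside of the $\psi^{-1}$-image of $R_i$, giving one inclusion of supports; for the reverse inclusion, any representative of $f_i^g$ whose support is a proper subsurface of $g(R_i)$ could be conjugated back by $\psi$ to yield a representative of $f_i$ supported on a proper subsurface of $R_i$, contradicting the minimality property defining $S_{f_i} = R_i$.

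Combining the componentwise equalities gives $S_{f^g} = \bigsqcup_i S_{f_i^g} = \bigsqcup_i g(S_{f_i}) = g(S_f)$. There is no real obstacle in this argument: it is essentially the naturality of support under conjugation, carefully expressed in the PSI setting. The only place requiring care is the bookkeeping between the conjugation convention $f^g = g^{-1}fg$ and the induced action of a representative of $g$ on subsurfaces, but this is purely a matter of tracking signs and involves no substantive difficulty.
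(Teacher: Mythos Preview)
The paper does not prove this statement at all: it is listed as one of three ``Facts'' about PSI elements and the reader is referred to Ivanov's monograph for details. Your argument is a correct direct verification, so there is nothing to compare at the level of strategy. One caution: with the paper's convention $f^g = g^{-1}fg$, the computation you carry out actually shows that a representative $\psi^{-1}\phi\psi$ of $f^g$ is supported on $\psi^{-1}(S_f)$, so under the usual convention $g\cdot S = \psi(S)$ one gets $S_{f^g} = g^{-1}(S_f)$ rather than $g(S_f)$. You flag this bookkeeping issue yourself; just be aware that the discrepancy is real and not merely cosmetic (the paper's own usage appears to adopt a convention that makes the stated formula come out right), so you should fix a convention explicitly rather than leave it as an exercise in sign-tracking.
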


The remainder of this section gives a characterisation of elements whose supports are $n$-holed spheres.  Many of the ideas are adapted from Brendle-Margalit \cite{BM17}.  A key difference however is that in the finite-type setting, there exists a finite-index normal subgroup of any mapping class group consisting solely of PSI elements.

\subsection*{Vertex pairs}
We call two elements $a,b \in \vF_N$ an \emph{overlapping pair} if there exist no powers of $a$ and $b$ that commute.  For any overlapping pair $(a,b)$ we define the set of \emph{commuting overlapping pairs}
\[
COP_N(a,b) = \{ (c,d) \mbox{ overlapping pair }\ |\ c,d \in C_N(a,b) \}.
\]
Finally, we say that an overlapping pair $(a,b)$ is a \emph{vertex pair} if $COP_N(a,b)$ is maximal with respect to inclusion among all overlapping pairs.  We leave the proof of the following lemma as an exercise for the reader.

\begin{lem}\label{lem_fc}
If $(a,b)$ is  a vertex pair in $N$, then for any $k>0$ the sets $COP_N(a^k,b^k)$ and $COP_N(a,b)$ are equal.  Hence $(a^k, b^k)$ is also vertex pair in $N$.
\end{lem}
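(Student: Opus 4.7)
The plan is to deduce the equality of the two sets of commuting overlapping pairs as a formal consequence of two observations: a centralizer inclusion that is immediate from the definitions, and the maximality built into the definition of a vertex pair.

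First I would verify that $(a^k, b^k)$ is itself an overlapping pair. This is by contrapositive: any relation of the form $(a^k)^m (b^k)^n = (b^k)^n (a^k)^m$ is a relation $a^{km} b^{kn} = b^{kn} a^{km}$, which exhibits commuting powers of $a$ and $b$ and contradicts the assumption that $(a, b)$ is overlapping. Hence $(a^k, b^k)$ qualifies to be considered as a candidate vertex pair, and in particular the symbol $COP_N(a^k, b^k)$ makes sense.

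Next I would observe the elementary centralizer inclusion $C_N(a, b) \subseteq C_N(a^k, b^k)$: anything commuting with $a$ and with $b$ automatically commutes with their powers $a^k$ and $b^k$. Consequently any overlapping pair $(c, d)$ with $c, d \in C_N(a, b)$ also satisfies $c, d \in C_N(a^k, b^k)$, giving the inclusion
\[
COP_N(a, b) \subseteq COP_N(a^k, b^k).
\]

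Finally I would invoke the maximality clause in the definition of vertex pair. Since $(a, b)$ is a vertex pair, there is no overlapping pair $(a', b')$ with $COP_N(a', b') \supsetneq COP_N(a, b)$. Applied to the overlapping pair $(a^k, b^k)$, this forces the inclusion above to be an equality, so $COP_N(a^k, b^k) = COP_N(a, b)$. As this common set is maximal among all $COP_N$ of overlapping pairs, the pair $(a^k, b^k)$ is itself a vertex pair, completing the lemma. There is no real obstacle here; the only point requiring care is keeping the logical direction of the maximality property straight, and remembering to first certify that $(a^k, b^k)$ genuinely is an overlapping pair before invoking maximality against it.
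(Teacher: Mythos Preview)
The paper leaves this lemma as an exercise for the reader, so there is no proof to compare against; your argument is exactly the intended one. The only small point you might add for completeness is that $a^k,b^k$ lie in $\vF_N$: if $a=g^2$ with $g$ having countable conjugacy class, then $a^k=(g^k)^2$ and $g^k$ also has countable conjugacy class, so the definition of overlapping pair applies.
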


For any vertex pair $(a,b)$, we define the subsurface $X_{(a,b)}$ to be the support of $COP_N(a,b)$.  Finally we define
\[
V_{(a,b)} = \vT \sm X_{(a,b)}.
\]

\begin{lem}\label{lem_spheres}
If $(a,b)$ is a vertex pair in $N$ then $V_{(a,b)}$ is homeomorphic to an $n$-holed sphere.  Moreover, if $(c,d)$ is another vertex pair then $V_{(c,d)}$ is homeomorphic to $V_{(a,b)}$.
\end{lem}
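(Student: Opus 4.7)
My plan is to combine an Ivanov-style analysis of centralizers with the extreme homogeneity of $\vT$, following the Brendle--Margalit template.

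By Fact \ref{fact_powers} and Lemma \ref{lem_fc}, I first replace $(a,b)$ with a power so that both $a$ and $b$ are PSI, without altering $X_{(a,b)}$ or $V_{(a,b)}$. Set $Y := S_a \cup S_b$, a compact subsurface. To prove $V_{(a,b)}$ is compact I use a \emph{translation argument}: by the self-similarity of $\vT$ there exist mapping classes $g \in \MCT$ carrying $Y$ into any prescribed end neighborhood, and the conjugate pair $(gag^{-1}, gbg^{-1})$ remains an overlapping pair in $N$ by normality; since its support is disjoint from $Y$ it commutes with $(a,b)$ and so lies in $COP_N(a,b)$. Letting $g$ range over a cofinal sequence of such translations shows $X_{(a,b)}$ exhausts $\vT$ outside a compact neighborhood of $Y$, so $V_{(a,b)}$ is compact; since $\vT$ has genus zero, $V_{(a,b)}$ is a finite-boundary planar surface.

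For any $f \in COP_N(a,b)$, after passing to a PSI power Fact \ref{fact_commute} applied to both $(f,a)$ and $(f,b)$ forces each component of $f$ either to miss $Y$ or to coincide with a common component of $a$ and $b$; the overlapping hypothesis on $(a,b)$ prevents the latter from covering $Y$, so $Y$ meets $V_{(a,b)}$ nontrivially. For connectedness I suppose $V_{(a,b)} = V_1 \sqcup V_2$. If some $V_i$ is disjoint from $Y$, the translation argument places a conjugate of $(a,b)$ inside $V_i$, contradicting $V_i \subset V_{(a,b)}$. If both $V_i$ meet $Y$, the PSI decompositions split as $a = a_1 a_2$, $b = b_1 b_2$ with $a_i, b_i$ supported near $V_i$; since parts supported on disjoint subsurfaces commute, the overlap of $(a,b)$ forces the overlap to occur within a single $V_i$, and by normality together with conjugation one manufactures overlapping pairs in $N$ supported strictly within that $V_i$, which lie in $COP_N(a,b)$ and strictly enlarge it, contradicting maximality. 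Thus $V_{(a,b)}$ is a compact, connected, planar surface with finitely many boundary components, i.e.\ an $n$-holed sphere.

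For the moreover clause, vertex pairs correspond to topologically minimal overlapping configurations in $N$: if $V_{(c,d)}$ were an $m$-holed sphere with $m \neq n$, comparing $COP_N(a,b)$ with $COP_N(c',d')$ for a suitably positioned conjugate $(c',d')$ of $(c,d)$ (using the homogeneity of $\vT$) would yield a strict enlargement contradicting the vertex-pair maximality of whichever pair has the larger $V$. Hence $n = m$ and the two surfaces are homeomorphic. I expect the main technical obstacle to be the second case of the connectedness argument: producing \emph{genuine} elements of $N$ supported strictly inside a single $V_i$ requires a careful combination of conjugation and commutator constructions leveraging the normality of $N$, rather than naive extraction of the pieces $a_i, b_i$, which need not themselves lie in $N$.
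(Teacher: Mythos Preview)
Your compactness argument and the ``moreover'' clause are essentially the paper's own arguments, so those parts are fine.  The real divergence is in connectedness, and there the gap you flag is genuine and, as written, not closed.

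In your Case~1 you assert that if a component $V_i$ of $V_{(a,b)}$ is disjoint from $Y$, the translation argument places a conjugate of $(a,b)$ \emph{inside $V_i$}.  But the translation argument only lets you push $Y$ into any infinite-type piece of $\vT\setminus Y$; it gives no control over whether the image lands in the specific compact subsurface $V_i$, which could be an annulus or a pair of pants too small to support any overlapping pair at all.  So Case~1 does not obviously lead to a contradiction.  In Case~2 you correctly identify that the pieces $a_i,b_i$ need not lie in $N$, and you propose to manufacture elements of $N$ supported in a single $V_i$ by ``conjugation and commutator constructions''; but you do not say what those constructions are, and this is exactly the hard step.

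The paper bypasses the entire case analysis with one idea you are missing: a commutator construction that replaces $(a,b)$ by a pair of \emph{partial pseudo-Anosov} elements $(\bar a,\bar b)$ in $N$ with the same $COP_N$.  Concretely, after passing to PSI powers one picks a single component $a_j$ of $a$ that fails to commute with some components $\hat b$ of $b$, and sets $\bar a=[a_j,b]$, $\bar b=[a_j,b^{-1}]$; normality of $N$ puts $\bar a,\bar b\in N$, and since they are words in $a_j$ and $\hat b$ (which generate a free group on a connected subsurface) they are partial pseudo-Anosovs with connected support.  Anything commuting with $a$ and $b$ commutes with $a_j$ and $\hat b$, hence with $\bar a,\bar b$, so $COP_N(a,b)\subset COP_N(\bar a,\bar b)$ and maximality gives equality.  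Now connectedness of $V_{(a,b)}=V_{(\bar a,\bar b)}$ is immediate: if it were disconnected, the two connected supports of $\bar a$ and $\bar b$ would lie in different components and hence be disjoint, contradicting that $(\bar a,\bar b)$ is overlapping.  This single reduction does the work of both of your cases at once.
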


\begin{proof}
Let $a^k, b^k$ be PSI and let $a_1,\dots,a_s$ and $b_1,\dots,b_t$ be the respective components.  There exists some $a_j$ and some $I \subset \{1, \dots, t\}$ such that $a_j$ fails to commute with $b_i$ if and only if $i \in I$.  We define $\hat b = \prod_{i \in I}b_i$.  Moreover, we may choose $k$ such that $a_j$ and $\hat b$ generate a free group.  Now, we define $\bar a = [a_j, b]$ and $\bar b = [a_j, b^{-1}]$.  Note that by construction $\bar a, \bar b \in \vF_N$ and can be written in terms of $a_j$ and $\hat b$. Furthermore, both $\bar a$ and $\bar b$ are partial pseudo-Anosov and no powers of $\bar a$ and $\bar b$ commute, so they are an overlapping pair.

If $(c,d) \in COP_N(a,b)$ then $c,d$ commute with both $a_j$ and $\hat b$, hence $\bar a$ and $\bar b$.  It follows that $COP_N(a,b) \subset COP_N(\bar a, \bar b)$.  Since $COP_N(a,b)$ is maximal we have that $COP_N(a,b) = COP_N(\bar a, \bar b)$.  We will write $V$ and $W$ for $V_{(a,b)}$ and $V_{(\bar a, \bar b)}$ respectively, and so we have shown that $V = W$.

Suppose $W$ is of infinite-type.  Since $\bar a$ and $\bar b$ are of finite-type, there exists an overlapping pair in $W$ whose supports are both disjoint from $\bar a$ and $\bar b$ this is a contradiction, so $W$ must be of finite-type.  Since $\bar a$ and $\bar b$ are partial pseudo-Anosov they each have connected support contained in $W$.  If $W$ has two or more disjoint components then $\bar a$ and $\bar b$ have disjoint support, hence they commute.  Again, this is a contradiction, so $W$ is homeomorphic to an $n$-holed sphere.

Finally, let $(c,d)$ be a vertex pair such that $V_{(c,d)}$ is homeomorphic to an $m$-holed sphere.  Up to relabeling it follows that there exists a mapping class $f$ such that $f(V_{(c,d)}) \subset V_{(a,b)}$.  We now have that $(c^f, d^f)$ is a vertex pair and $COP_N(a, b) \subset COP_N(c^f,d^f)$.  Since $COP_N(a,b)$ is maximal the two sets are equal, so $V_{(a,b)} = V_{(c^f,d^f)} = f(V_{(c,d)})$, hence $m=n$.
\end{proof}

Lemma \ref{lem_spheres} tells us that vertex pairs define reasonable candidates for the vertices for some $\vS_n$. The next lemma completes the picture, by showing that the commutativity of two vertex pairs is determined by their supports.

\begin{lem}\label{lem_adj}
Given two vertex pairs $(a,b)$ and $(c,d)$, the spheres $V_{(a,b)}$ and $V_{(c,d)}$ are disjoint if and only if $(c,d) \in COP_N(a,b)$.
\end{lem}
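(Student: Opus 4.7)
The plan is to handle both directions using the partial pseudo-Anosov representatives $\bar a, \bar b$ and $\bar c, \bar d$ produced in the proof of Lemma \ref{lem_spheres}, which have supports inside $V_{(a,b)}$ (resp.\ $V_{(c,d)}$) and which, by maximality, satisfy $COP_N(\bar a, \bar b) = COP_N(a, b)$ (and analogously for $(c,d)$).

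For the reverse direction, I would assume $V_{(a,b)} \cap V_{(c,d)} = \emptyset$. Then the canonical representatives have disjoint supports, so $\bar a, \bar b$ commute with $\bar c, \bar d$, giving $(\bar c, \bar d) \in COP_N(\bar a, \bar b) = COP_N(a, b)$. Passing back to the originals via Fact \ref{fact_commute} applied to the PSI decompositions of $a, b, c, d$ (each component of $c^k, d^k$ either has support disjoint from each component of $a^k, b^k$ or is a common partial pseudo-Anosov power, the latter handled by the disjointness of $V$'s), one concludes $c, d \in C_N(a, b)$. Combined with the overlapping hypothesis on $(c, d)$, this yields $(c, d) \in COP_N(a, b)$.

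For the forward direction, I would assume $(c, d) \in COP_N(a, b)$. By definition $\supp(c) \cup \supp(d) \subset X_{(a,b)}$; by the symmetry of the commuting-pair relation together with the maximality of $COP_N(c,d)$, I would also get $(a,b) \in COP_N(c,d)$, so $\supp(a) \cup \supp(b) \subset X_{(c,d)}$. Passing to the canonical representatives yields $\supp(\bar a, \bar b) \subset V_{(a,b)} \cap X_{(c,d)}$ and $\supp(\bar c, \bar d) \subset V_{(c,d)} \cap X_{(a,b)}$; in particular both partial pseudo-Anosov supports avoid $Y := V_{(a,b)} \cap V_{(c,d)}$. Assuming for contradiction that $Y \neq \emptyset$, I would exhibit an overlapping pair $(e, f)$ supported in $Y$. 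Such an $(e, f)$ commutes with $a, b$ (whose supports lie in $X_{(c,d)} = \vT \setminus V_{(c,d)}$, hence disjoint from $Y \subset V_{(c,d)}$), so $(e, f) \in COP_N(a, b)$ and thus $\supp(e, f) \subset X_{(a, b)}$. But $\supp(e, f) \subset Y \subset V_{(a, b)}$ contradicts $V_{(a, b)} \cap X_{(a, b)} = \emptyset$.

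The main obstacle will be producing the overlapping pair $(e, f)$ inside $Y$: this requires $Y$ to contain a subsurface at least as complex as a four-holed sphere. By the same maximality argument that appears inside the proof of Lemma \ref{lem_spheres}, the complement $V_{(a, b)} \setminus \supp(\bar a, \bar b)$ cannot itself carry an overlapping pair in $COP_N(a,b)$, so it is a disjoint union of disks, annuli, and pairs of pants; and $Y$ must lie in this trivial complement. The delicate step is therefore to use the extreme homogeneity of $\vT$ (as invoked in the last paragraph of the proof of Lemma \ref{lem_spheres}) to rule out the case where two $n$-holed spheres intersect in a non-empty but topologically trivial region, or alternatively to replace $(c, d)$ by a more carefully chosen vertex pair within its equivalence class so that an overlapping pair in $Y$ can be explicitly produced.
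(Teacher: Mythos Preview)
Your labels for the two implications are swapped relative to the paper, which is harmless; the real divergence is in the implication $(c,d)\in COP_N(a,b)\Rightarrow V_{(a,b)}\cap V_{(c,d)}=\emptyset$ (your ``forward'', the paper's ``other direction''). Your contradiction strategy hinges on producing an overlapping pair supported in $Y=V_{(a,b)}\cap V_{(c,d)}$, and as you yourself flag, this collapses when $Y$ is a union of annuli and pairs of pants---a configuration you have no mechanism to exclude. The paper never enters this territory. Instead it passes to PSI powers via Lemma~\ref{lem_fc} and argues directly with Fact~\ref{fact_commute}: since $c$ and $d$ each commute with both $a$ and $b$, every component of $a$ is either support-disjoint from or a power of the same partial pseudo-Anosov as each component of $c$ (and likewise for $d$). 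The overlapping hypothesis on $(c,d)$ then rules out the shared-pseudo-Anosov case after a short chase (if $\supp a$ meets $\supp d$ they share a pA, hence $a^k=d^k$ for some $k$, whence $b$ fails to commute with $d^k$, contradiction), and one concludes $\supp(a)\cup\supp(b)$ is disjoint from $\supp(c)\cup\supp(d)$. Since each $V$ sits inside the support of its defining pair via the $\bar a,\bar b$ construction, disjointness of the $V$'s follows. No overlapping pair in $Y$ is ever needed.

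Your reverse implication also has a soft spot: from $(\bar c,\bar d)\in COP_N(a,b)$ you want to ``pass back'' to $(c,d)\in COP_N(a,b)$, but the original $c$ may carry components far from $V_{(c,d)}$ which could in principle fail to commute with stray components of $a$; your parenthetical does not actually address this. The paper disposes of this direction in one line from Fact~\ref{fact_commute} and the definition of vertex pair, and the moral is the same as above: work directly with the PSI component structure rather than routing everything through the canonical representatives $\bar a,\bar b,\bar c,\bar d$.
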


\begin{proof}
The forward direction follows from Fact \ref{fact_commute} and the definition of vertex pair.  To prove the other direction we note that from Lemma \ref{lem_fc} we can assume that $a,b,c,d$ are all PSI.  Since no powers of $c$ and $d$ commute, Fact \ref{fact_commute} says that, up to relabeling, the supports of $a$ and $c$ are disjoint and the supports of $b$ and $d$ are disjoint.

If the supports of $a$ and $d$ intersect, then from Fact \ref{fact_commute} they are powers of the same partial pseudo-Anosov.  There therefore exists some $k>0$ such that $a^k=d^k$.  Since no powers of $a$ and $b$ commute, we have that $b$ and $d^k$ do not commute.  This is a contradiction, so the supports of $a$ and $d$ do not intersect, nor do the supports of $b$ and $c$.  It follows that $V_{(a,b)}$ and $V_{(c,d)}$ are disjoint.
\end{proof}

\section{Automorphisms of normal subgroups}\label{section_automorphisms}
Following from the results of the previous section, we can now define the map
\[
\Psi : \Aut N \to \Aut \vS_n
\]
that sends $\alpha \in \Aut N$ to $\alpha_* \in \Aut \vS_n$, where
$\alpha_* (V_{(a,b)}) = V_{(\alpha(a),\alpha(b))}$.

\begin{lem}\label{lem_welldefinedinjective}
The map $\Psi$ defined above can be extended to an injective homomorphism.
\end{lem}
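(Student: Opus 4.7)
The plan is to verify that $\alpha_*$ sends vertex pairs to vertex pairs and descends to a well-defined map on the vertex set of $\vS_n$, to check that this map preserves edges and so lies in $\Aut \vS_n$, and finally to prove injectivity. For the well-definedness: since $\alpha$ is a group automorphism of $N$, it preserves every algebraic condition---commuting relations, centralizers, and the property of being an overlapping pair. In particular $\alpha(COP_N(a,b)) = COP_N(\alpha(a),\alpha(b))$ as subsets of $N \times N$, so the maximality condition defining vertex pairs is preserved and vertex pairs go to vertex pairs. To descend to the vertex set of $\vS_n$, suppose $V_{(a,b)} = V_{(a',b')}$; by Lemma~\ref{lem_adj} the vertex pairs lying in $COP_N(a,b)$ are exactly those lying in $COP_N(a',b')$. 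Applying $\alpha$ gives the analogous statement for $(\alpha(a),\alpha(b))$ and $(\alpha(a'),\alpha(b'))$, and a second application of Lemma~\ref{lem_adj} shows that $V_{(\alpha(a),\alpha(b))}$ and $V_{(\alpha(a'),\alpha(b'))}$ have identical neighbourhoods in $\vS_n$, hence coincide as vertices.

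Edge-preservation is then immediate from Lemma~\ref{lem_adj}: adjacency of $V_{(a,b)}$ and $V_{(c,d)}$ is characterised by the algebraic condition $(c,d) \in COP_N(a,b)$, which $\alpha$ respects. Since $(\alpha^{-1})_*$ provides a two-sided inverse to $\alpha_*$, we obtain $\alpha_* \in \Aut \vS_n$; and $(\alpha\beta)_* = \alpha_*\beta_*$ follows directly from the definition, so $\Psi$ is a homomorphism.

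The main obstacle will be injectivity. Suppose $\Psi(\alpha) = \id_{\vS_n}$, so $\alpha$ fixes $V_{(a,b)}$ for every vertex pair $(a,b)$. Using Lemma~\ref{lem_inseparable}, every curve $c \subset \vT$ arises as the unique shared boundary of an inseparable pair of vertex-pair spheres, and this pair is recorded by purely algebraic data that $\alpha$ preserves. Combined with the component-by-component analysis of PSI elements from Section~\ref{section_elements} (in particular Fact~\ref{fact_commute}, which forces each component of $\alpha(a)$ to share support with a component of $a$), I would show that $\alpha$ acts trivially on every PSI element; then Fact~\ref{fact_powers} and Proposition~\ref{prop_finite} propagate this triviality to the rest of $N$. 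The technical heart will be extracting enough rigidity from the vertex-pair structure to exclude any nontrivial $\alpha$ with $\alpha_* = \id$.
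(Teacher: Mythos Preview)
Your treatment of well-definedness and edge-preservation is essentially the paper's, modulo one small wrinkle: from ``$V_{(\alpha(a),\alpha(b))}$ and $V_{(\alpha(a'),\alpha(b'))}$ have identical neighbourhoods in $\vS_n$'' you jump to ``hence coincide as vertices,'' which presupposes that $\vS_n$ has no twin vertices. This is true and not hard, but it is an extra fact you would need to check. The paper sidesteps this by arguing directly that $COP_N(\alpha(a),\alpha(b)) = COP_N(\alpha(c),\alpha(d))$ as sets (inclusion suffices by maximality), which forces the complements $V$ to agree on the nose.

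The injectivity argument, however, has a genuine gap. Your plan is to show that $\alpha$ fixes every PSI element and then ``propagate'' via Fact~\ref{fact_powers} and Proposition~\ref{prop_finite}. Two problems: first, knowing $\alpha(f^k)=f^k$ for some $k$ does not in general yield $\alpha(f)=f$; second, and more seriously, elements of $N$ with infinite support (uncountable conjugacy class) are never reached by this route at all, so your propagation cannot cover all of $N$. The ``component-by-component analysis'' you allude to via Fact~\ref{fact_commute} is also left entirely unspecified, and it is not clear how it would pin down $\alpha$ on individual PSI elements rather than just on their supports.

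The paper's argument is both shorter and avoids all of this. The point is that $N$ sits inside $\MCT$, and Theorem~\ref{thm_graph} gives an injection $\eta_n:\MCT\hookrightarrow\Aut\vS_n$. So every $f\in N$ already acts on $\vS_n$ as $f_*=\eta_n(f)$, and conjugation of vertex pairs by $f$ realises this action: $f_*(V_{(a,b)}) = V_{(a^f,b^f)}$. Assuming $\alpha_*=\id$, one computes directly
\[
f_*(V_{(a,b)}) = V_{(a^f,b^f)} = \alpha_*\bigl(V_{(a^f,b^f)}\bigr) = V_{(\alpha(a)^{\alpha(f)},\alpha(b)^{\alpha(f)})} = \alpha(f)_*\bigl(\alpha_*(V_{(a,b)})\bigr) = \alpha(f)_*(V_{(a,b)}),
\]
so $f_*=\alpha(f)_*$ in $\Aut\vS_n$, and injectivity of $\eta_n$ gives $f=\alpha(f)$. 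This handles every $f\in N$ at once, with no case analysis on support or PSI structure.
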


\begin{proof}
We first verify that the map makes sense.  First, $V_{(a,b)}$ is an $n$-holed sphere from Lemma \ref{lem_spheres}.  By definition $(\alpha(a),\alpha(b))$ is a vertex pair and so again $V_{(\alpha(a),\alpha(b))}$ is an $n$-holed sphere.  We now show that $\Psi$ is well defined.

We must show that if $V_{(a,b)} = V_{(c,d)}$ then $\alpha_* ( V_{(a,b)} ) = \alpha_* ( V_{(c,d)} )$.  This is equivalent to showing that
\[
COP_N(\alpha (a), \alpha (b)) \subset COP_N(\alpha (c), \alpha (d))
\]
Suppose $(f,g) \in COP_N(\alpha (a), \alpha (b))$.  Since $\alpha$ is an isomorphism, $\alpha^{-1}(f),\alpha^{-1}(g)$ commute with $a$ and $b$.  By Lemma \ref{lem_adj} we have that
\[
(\alpha^{-1}(f), \alpha^{-1}(g)) \in COP_N(c,d).
\]
Again, since $\alpha$ is an isomorphism we conclude that $f,g$ commute with $\alpha (c),\alpha(d)$.  It follows that $\Phi$ is a well defined set map.  By Lemma \ref{lem_adj} however, this extends to a well defined automorphism.  It remains to show that $\Psi$ is injective.

We assume that $\alpha_*$ is the identity and we will show that $\alpha(f) = f$ for all $f \in N$.  As before, we denote by $\alpha(f)_*$ and $f_*$ the images of $\alpha(f)$ and $f$ in $\vS_n$ under the injective map $\eta_n : \MCT \hookrightarrow \Aut \vS_n$.  We will show that $\alpha(f)_* = f_*$.  To that end, let $(a,b)$ be a vertex pair.
\begin{align*}
f_*(V_{(a,b)}) &= V_{(a^f, b^f)} = \alpha_* ( V_{(a^f, b^f)} ) = V_{ ( \alpha (a^f), \alpha ( b^f ) )} \\ 
&= V_{ ( \alpha (a)^{\alpha(f)}, \alpha(b)^{\alpha(f)} ) } = \alpha(f)_*(V_{(\alpha(a),\alpha(b))}) \\ 
&= \alpha(f)_* \alpha_* (V_{(a,b)}) = \alpha(f)_* (V_{(a,b)}).
\end{align*}
Since $\alpha(f)_* = f_*$ and $\eta_n$ is injective, we have shown that $\alpha(f) = f$, completing the proof.
\end{proof}

We now combine Theorem \ref{thm_graph} and Lemma \ref{lem_welldefinedinjective} to complete the proof of the main result.

\begin{proof}[Proof of Theorem \ref{thm_normal}]
We first show that the natural map $\MCT \hookrightarrow \Aut N$ is injective.  Suppose $f \in \MCT$ is such that $a^f = a$ for all $a \in N$.  Then from Lemma \ref{lem_spheres} $f$ fixes every $n$-holed sphere.  Injectivity therefore is implied by Lemma \ref{lem_inj}.

Since the composition of injective maps
\[
\MCT \hookrightarrow \Aut N \hookrightarrow \Aut \vS_n \cong \MCT
\]
is the identity homomorphism, the result follows.
\end{proof}

\bibliography{alanbib}{}

\bibliographystyle{plain}
\end{document}